\numberwithin{equation}{section}
                        \theoremstyle{plain}
\newcommand\no[1]{}
\newtheorem{theorem}{Theorem}[section]
\newtheorem{thm}{Theorem}
\newtheorem{lemma}[theorem]{Lemma}
\newtheorem{corollary}[theorem]{Corollary}
\newtheorem{proposition}[theorem]{Proposition}
\newtheorem{claim}[theorem]{Claim}
\theoremstyle{definition}
\newtheorem{remark}[theorem]{Remark}
\def\BC{\mathbb C}
\def\BZ{\mathbb Z}
\def\BR{\mathbb R}
\def\la{\langle}
\def\ra{\rangle}
\DeclareMathOperator{\tr}{\mathrm tr}
\def\be { \begin{equation} }
\def\ee { \end{equation} }
\numberwithin{equation}{section}
\def\tr{\text{tr}}
\def\SL{{\rm SL}_2(\BR)}
\def\uSL{\widetilde{\SL}}
\DeclareMathOperator{\Tr}{Tr}
\begin{document}
\allowdisplaybreaks
\baselineskip16pt
\title[LO surgeries of double twist knots II]{Left orderable surgeries  of double twist knots II}

\begin{abstract}
A slope $r$ is called a left orderable slope of a knot $K \subset S^3$ if the 3-manifold obtained by $r$-surgery along $K$ has left orderable fundamental group. Consider  two-bridge knots $C(2m, \pm 2n)$ and $C(2m+1, -2n)$ in the Conway notation, where $m \ge 1$ and $n \ge 2$ are integers. By using \textit{continuous} families of hyperbolic $\mathrm{SL}_2(\BR)$-representations of knot groups, it was shown in  \cite{HT-genus1, Tr} that any slope in $(-4n, 4m)$ (resp. $[0, \max\{4m, 4n\})$) is a left orderable slope of $C(2m, 2n)$ (resp. $C(2m, - 2n)$) and in \cite{Ga} that any slope in $(-4n,0]$ is a left orderable slope of $C(2m+1,-2n)$. However, the proofs of these results are incomplete since the \textit{continuity} of the families of representations was not proved. In this paper, we complete these proofs and moreover we show that any slope in $(-4n, 4m)$ is a left orderable slope of $C(2m+1,-2n)$ detected by hyperbolic $\mathrm{SL}_2(\BR)$-representations of the knot group. 
\end{abstract}

\author{Vu The Khoi}
\address{Institute of Mathematics, VAST, 18 Hoang Quoc Viet, 10307, Hanoi, Vietnam}
\email{vtkhoi@math.ac.vn}
\thanks{The first author has been partially supported by the Vietnam National Foundation for Science and Technology Development (NAFOSTED) under grant number 101.04-2015.20.}

\author[M. Teragaito]{Masakazu Teragaito}
\address{Department of Mathematics and Mathematics Education, Hiroshima University, 
1-1-1 Kagamiyama, Higashi-Hiroshima, 739--8524, Japan}
\email{teragai@hiroshima-u.ac.jp}
\thanks{The second  author has been partially supported by JSPS KAKENHI Grant Number JP16K05149.}

\author{Anh T. Tran}
\address{Department of Mathematical Sciences,
The University of Texas at Dallas,
Richardson, TX 75080-3021, USA}
\email{att140830@utdallas.edu}
\thanks{
The third author has been partially supported by a grant from the Simons Foundation (\#354595).
}


\maketitle

\section{Introduction}

The study of left orderability of fundamental groups of $3$-manifolds obtained by Dehn surgeries along knots is motivated by the L-space conjecture of Boyer, Gordon and Watson \cite{BGW} which states that an irreducible rational homology 3-sphere is an L-space if and only if its fundamental group is not left orderable. Here a rational homology 3-sphere $Y$ is an L-space if its Heegaard Floer homology $\widehat{\mathrm{HF}}(Y)$ has rank equal to the order of $H_1(Y; \BZ)$, and a non-trivial group $G$ is left orderable if it admits a total ordering $<$ such that $g<h$ implies $fg < fh$ for all elements $f,g,h$ in $G$.

Many hyperbolic 3-manifolds are obtained by Dehn surgeries along knots. A slope $r$ is called a left orderable slope of a knot $K \subset S^3$ if the 3-manifold obtained by $r$-surgery along $K$ has left orderable fundamental group. Consider  two-bridge knots $C(2m, \pm 2n)$ and $C(2m+1, -2n)$ in the Conway notation, where $m \ge 1$ and $n \ge 2$ are integers. By using \textit{continuous} families of hyperbolic $\mathrm{SL}_2(\BR)$-representations of knot groups, it was shown in  \cite{HT-genus1, Tr} that any slope in $(-4n, 4m)$ (resp. $[0, \max\{4m, 4n\})$) is a left orderable slope of $C(2m, 2n)$ (resp. $C(2m, - 2n)$) and in \cite{Ga} that any slope in $(-4n,0]$ is a left orderable slope of $C(2m+1,-2n)$. However, the proofs of these results are incomplete since the \textit{continuity} of the families of representations was not proved. More precisely, \cite[Proposition 4.2]{HT-genus1}, \cite[Lemma 2.1]{Tr} and \cite[Proposition 4.2]{Ga} proved the existence of families of $\mathrm{SL}_2(\BR)$-representations of the knot groups but did not prove the continuity of these families. In this paper, we complete these proofs in Proposition \ref{solution-even1}, Proposition \ref{solution-even2} and Proposition \ref{solution-odd} respectively. Moreover,  we extend the range of left orderable slopes of $C(2m+1,-2n)$ detected by hyperbolic $\mathrm{SL}_2(\BR)$-representations of their knot groups. 

\begin{figure}[th] \label{fig}
\centerline{\psfig{file=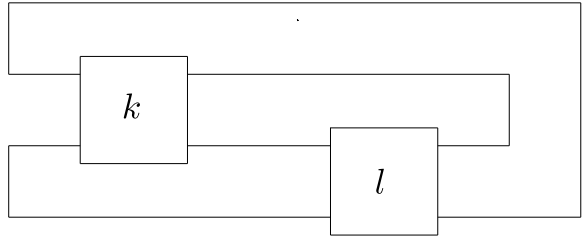,width=3in}}
\caption{The two-bridge knot/link $C(k,l)$ in the Conway notation.}
\end{figure} 

 \begin{thm} \label{main} 
 Let $m \ge 1$ and $n \ge 2$ be integers. Then any slope in $(-4n, 4m)$ is a left orderable slope of $C(2m+1,-2n)$ detected by hyperbolic $\mathrm{SL}_2(\BR)$-representations of the knot group. 
 \end{thm}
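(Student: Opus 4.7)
The plan is to apply the Boyer--Rolfsen--Wiest paradigm used in \cite{HT-genus1, Tr, Ga}: exhibit a continuous one-parameter family of non-abelian hyperbolic $\mathrm{SL}_2(\BR)$-representations of the knot group $G = \pi_1(S^3 \setminus C(2m+1,-2n))$ whose peripheral slopes sweep through the open interval $(-4n, 4m)$. Once such a family is in hand, lifting it to the universal cover $\widetilde{\mathrm{PSL}_2(\BR)}$ produces, for every realized slope $r$, a nontrivial homomorphism from $\pi_1(S^3_r(K))$ to a left orderable group, forcing the surgery group itself to be left orderable. The existing range $(-4n,0]$ from \cite{Ga} will be recovered as the ``negative half'' of the family (with the continuity gap closed by Proposition \ref{solution-odd}), and the new content is the extension of the parameter interval so that the slope function also covers $[0, 4m)$.

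First I would fix the standard two-bridge presentation $G = \langle a, b \mid wa = bw \rangle$ associated to $C(2m+1,-2n)$, with $a,b$ meridians and $w$ the explicit word dictated by the Conway parameters. Parametrizing irreducible $\mathrm{SL}_2(\BR)$-representations $\rho$ by $x = \tr \rho(a) = \tr \rho(b)$ and $y = \tr \rho(ab^{-1})$, the representation variety cuts out as the zero set of a Riley-type polynomial $\Phi(x,y)$ of Chebyshev-recursive form. Imposing $x^2 > 4$ to enforce hyperbolicity of the meridian image, I would parametrize by a single real variable $t$ controlling $x$ and solve $\Phi(x(t),y)=0$ for $y=y(t)$ on the relevant interval. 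The continuity of $y(t)$ is the technical heart of the construction and is precisely what Proposition \ref{solution-odd} provides: an intermediate-value-plus-monotonicity argument, powered by the recursive structure of the $\Phi_K$-Chebyshev identities, selects a single branch $y(t)$ that depends continuously on $t$ across the full parameter range corresponding to slopes in $(-4n, 4m)$.

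With the continuous family $\rho_t$ in place, the next step is to lift it to $\widetilde{\mathrm{PSL}_2(\BR)}$ and compute the slope annihilated by $\rho_t$ on the boundary torus. Writing the canonical longitude $\lambda$ as an explicit word in $a,b$ (using the standard two-bridge algorithm for the longitude), the killed slope is
\[
r(t) \;=\; -\,\widetilde{\phi}(\rho_t(\lambda))\,/\,\widetilde{\phi}(\rho_t(\mu)),
\]
where $\widetilde{\phi}$ denotes the translation (rotation) number of the lifted peripheral elements in $\widetilde{\mathrm{PSL}_2(\BR)}$. I would then verify that $r(t)$ is continuous in $t$, evaluate (or limit) it at the two endpoints of the parameter interval, and invoke the intermediate value theorem to conclude that $r(t)$ attains every value in $(-4n, 4m)$. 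A direct application of the Boyer--Rolfsen--Wiest criterion then promotes each such $r$ to a left orderable slope.

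The genuine obstacle is the continuity of the family, exactly as the introduction stresses. The $y$-branch of $\Phi(x(t),y)=0$ can a priori jump between real roots as $t$ moves, and ruling this out requires a delicate sign/monotonicity analysis along the whole interval, including the degenerate endpoints (where the meridian becomes parabolic or the representation becomes reducible). The portion of the argument that extends the slope range beyond Gao's $(-4n,0]$ up to $4m$ requires, in addition, controlling the limiting translation numbers of $\rho_t(\lambda)$ and $\rho_t(\mu)$ on the ``positive'' side of the parameter; this endpoint analysis is the main new calculation, and once it is in place the rest of the argument is a careful assembly of the standard left orderability machinery around the continuous family furnished by Proposition \ref{solution-odd}.
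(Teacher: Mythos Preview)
Your overall architecture is right and matches the paper: build a continuous one-parameter family of hyperbolic $\mathrm{SL}_2(\BR)$-representations via Proposition~\ref{solution-odd}, compute the slope function along it, show by an endpoint analysis that it sweeps $(-4n,4m)$, and then invoke Boyer--Rolfsen--Wiest. Two points, however, deserve correction.

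First, a minor orientation issue: Proposition~\ref{solution-odd} produces $x$ as a continuous function of $y$ on $(2\cos\frac{\pi}{2m+1},\infty)$, not the other way around. The paper parametrizes the family by $y$, writes $M(y)=\tfrac12(\sqrt{x(y)}+\sqrt{x(y)-4})$, and computes the longitude eigenvalue $L(y)$ explicitly; the slope function is $f(y)=-\log L(y)/\log M(y)$, and Proposition~\ref{solution-odd-limit} supplies the precise asymptotics needed to see $f(y)\to -4n$ as $y\to (2\cos\frac{\pi}{2m+1})^+$ and $f(y)\to 4m$ as $y\to\infty$. Your plan to ``solve for $y=y(t)$'' is the wrong direction and does not line up with what Proposition~\ref{solution-odd} actually provides.

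Second, and more substantively, there is a genuine gap in your lifting step. You assert that lifting to $\widetilde{\mathrm{SL}_2(\BR)}$ automatically yields a homomorphism of $\pi_1(S^3_r(K))$ once $r$ is a realized slope. For \emph{hyperbolic} peripheral images this is not automatic: a hyperbolic element of $\widetilde{\mathrm{SL}_2(\BR)}$ is conjugate to $(\tanh b,\,k\pi)$ for a well-defined integer $k$, and if the longitude lifts with $k\neq 0$ then only slopes $p/q$ with $p\mid k$ can be killed (see Proposition~\ref{p1}(2)). The paper handles this with a specific idea you do not mention: the family $\{\rho_y\}$ passes through the \emph{reducible} non-abelian representation at $y=2$, which has index $0$ by Corollary~\ref{c1}, and since the index is constant along a connected family of hyperbolic representations (Lemma~\ref{l1}), the entire family has index $0$. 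Only then does Proposition~\ref{p1}(1) apply to every slope in $(-4n,4m)$. Your invocation of ``translation/rotation numbers $\widetilde{\phi}$'' is the language of the elliptic case and does not by itself resolve this integer obstruction in the hyperbolic setting.
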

 
\begin{remark}
In \cite{Tr-I}, by following the method of Culler-Dunfield's paper \cite{CD} the third author used \textit{continuous} families of elliptic $\mathrm{SL}_2(\BR)$-representations of knot groups to show that if $K$ is a two-bridge knot of the form $C(2m,-2n)$,  $C(2m+1, 2n)$ or $C(2m+1, -2n)$, where $m \ge 1$ and $n \ge 1$ are integers, and 
\[
\text{LO}_K = \begin{cases} (-\infty,1) &\mbox{if } K = C(2m,-2n), \\ 
(-\infty, 2n-1) & \mbox{if } K = C(2m+1,2n),  \\ 
(3-2n, \infty) & \mbox{if } K = C(2m+1,-2n) \text{ and } n \ge 2, \end{cases}
\]
then any slope in $\text{LO}_K$ is a left orderable slope of $K$. 

Gao \cite{Ga} independently showed that if $K$ is a two-bridge knot of the form $C(2m+1, 2n)$ or $C(2m+1, -2n)$, where $m \ge 1$ and $n \ge 1$ are integers, and 
\[
\text{LO}'_K = \begin{cases}  
(-\infty, 1) & \mbox{if } K = C(2m+1,2n),  \\ 
(-1, \infty) & \mbox{if } K = C(2m+1,-2n) \text{ and } n \ge 2, \end{cases}
\]
then any slope in $\text{LO}'_K$ is a left orderable slope of $K$. Her proof also used families of elliptic $\mathrm{SL}_2(\BR)$-representations of knot groups. Note that $\text{LO}'_K$ is a subset of $\text{LO}_K$. 
\end{remark}

For more on the study of left orderable slopes of knots using hyperbolic $\mathrm{SL}_2(\BR)$-representations of knot groups, see 
\cite{BGW, HT-52, Ga-holonomy}.

This paper is organized as follows. In Section 2, we recall some facts about  the universal covering group $\uSL$ of  $\SL$ and we study the lifting problem of a connected curve of hyperbolic $\SL$-representations of knot groups. 
In Section 3, we review the Riley polynomial of two-bridge knots $C(k,-2p)$, whose zero locus describes all non-abelian representations of the knot group into $\mathrm{SL}_2(\BC)$. In Section 3 we prove the existence of continuous families of hyperbolic $\mathrm{SL}_2(\BR)$-representations of the knot groups of $C(2m, \pm 2n)$ and therefore fix the gaps in \cite{HT-genus1, Tr}. In Section 4 we prove the existence of continuous families of hyperbolic $\mathrm{SL}_2(\BR)$-representations of the knot groups of $C(2m+1, -2n)$ and use it to give a proof of Theorem \ref{main}.


\section{Lifting of a curve of hyperbolic representations}	
\subsection{The group $\uSL$} 
We recall some facts about the universal covering group $\uSL$ (see \cite{vu} pages 763-764). Let $\psi\colon \uSL \rightarrow \SL $ be the covering map. We can parameterize the universal covering group as
  $$\uSL= \left\{(\gamma,\omega)\mid |\gamma|<1, -\infty<w<\infty\right\}.$$ For an element $g=(\gamma,\omega)\in \uSL,$ we will write $g[1]=\gamma$ and $g[2]=\omega.$ 
  
  An element of $\uSL$ is called \emph{elliptic/parabolic/hyperbolic} if it covers a matrix in $\SL$ of the corresponding type. 

     The multiplication rule in the group $\uSL$ is given by
  $(\gamma, \omega)(\gamma', \omega')=(\gamma'', \omega'')$ where
  \begin{eqnarray*}\label{rule1}
  \gamma''&=&\frac{\gamma+\gamma'e^{-2i\omega}}{1+\bar
  	\gamma\gamma'e^{-2i\omega}} \\\label{rule2}
  \omega''&=&\omega+\omega'+ \arg(1+\bar
  \gamma\gamma'e^{-2i\omega}). 
  \end{eqnarray*}          
  
 Let $A=\begin{pmatrix} a & b\\ c & d    \end{pmatrix}$ be a matrix in $\SL$ then 
\begin{equation*}
\psi^{-1}(A)=\left \{ \left ( \frac{a-d+(b+c)i}{a+d+(b-c)i}, \arg(a+d+(b-c)i) +2n\pi \right ) \mid  n\in \BZ\right \}.
\end{equation*}
Here, the function argument takes value in the interval $(-\pi,\pi ].$  We note that if $\Tr(A)= a+d>0$ then in the above formula we have $\arg(a+d+(b-c)i)=\arctan(\frac{b-c}{a+d})\in (-\frac{\pi}{2},\frac{\pi}{2}).$    
 
\subsection{Lifting of a curve of hyperbolic representations}

For a knot $K$ in $S^3$, let $X$ be an open tubular neighborhood of $K$ and let $G(K) = \pi_1(X)$ be the knot group of $K$ which is the fundamental group of $X$. 
Let $\mu$ be a meridian and $\lambda$ the canonical longitude. 
 Recall that any representation $\rho\colon G(K) \to \SL$ can be lifted to a representation $\widetilde{\rho}$ into $\uSL$ because $H^2(K,\BZ)=0.$   We know from \cite{vu} that the lifts of $\rho$ come in a family. If we fix a lift $\widetilde{\rho}_0,$  then we have a $\BZ$-family of lifts $\widetilde{\rho}_n$ given by  $\widetilde{\rho}_n (g)=\widetilde{\rho}_0(g)h_n(g),$ where $h_n$ is the representation  
  \begin{eqnarray*}
  	G(K)  \longrightarrow& H_1(X)\equiv \BZ&\longrightarrow \uSL \\
  	\mu   \quad\longmapsto & 1  &\longmapsto (0,n\pi). 
  \end{eqnarray*} 
From this, we see that $\widetilde{\rho}_n (\lambda)$ does not depend on $n.$

An $\SL$ representation $\rho$ of a knot group $G(K)$ is called \emph{hyperbolic\/} if  $\rho(\mu)$ and  $\rho(\lambda)$ are hyperbolic elements.             
 
 Recall that a hyperbolic element $g\in \uSL$ can be conjugated to a unique normal form $(\tanh(a), k\pi)$ if and only if $k\pi -\frac{\pi}{2}<g[2]< k\pi +\frac{\pi}{2}.$ 
  Let us fix an arbitrary lift $\widetilde{\rho}_0\colon G(K)\rightarrow \uSL$ and suppose that $\widetilde{\rho}_0(\lambda)$ is conjugate to $(\tanh(b), k\pi).$ As noted above, the number $k$ does not depend on the chosen lift $\widetilde{\rho}_0.$  
  We call $k$ the \emph{index} of the representation $\rho.$
   
 \begin{lemma}\label{l1} Let $C$ be a connected curve of hyperbolic representations of  a knot group $G(K)$ into $\SL$. Then the indexes of all representations of $C$ are the same. \end{lemma}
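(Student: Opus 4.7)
The plan is to show that the index function $\rho \mapsto k(\rho)$ is locally constant along $C$. Because $k$ takes values in $\BZ$ and $C$ is connected, local constancy forces constancy, which is exactly the conclusion.

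First I would establish continuous lifting: given any $\rho_0 \in C$, in a connected neighborhood $U \subset C$ of $\rho_0$ I want continuous lifts $\widetilde{\rho}_t \colon G(K) \to \uSL$ with $t \in U$, extending a fixed lift $\widetilde{\rho}_0$. Choose a finite presentation of $G(K)$ with generators $g_1, \dots, g_N$ and relations $r_1, \dots, r_M$. For each $i$, the map $t \mapsto \rho_t(g_i) \in \SL$ is continuous, so by the path-lifting property of the covering $\psi \colon \uSL \to \SL$ it lifts uniquely to a continuous curve $t \mapsto \widetilde{\rho}_t(g_i) \in \uSL$ starting at $\widetilde{\rho}_0(g_i)$. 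Define $\widetilde{\rho}_t$ on words by the group multiplication in $\uSL$. For each relation $r_j$, the element $\widetilde{\rho}_t(r_j)$ lies in the discrete subgroup $\ker \psi$, depends continuously on $t$, and equals the identity at $t = 0$; by connectedness of $U$ it equals the identity for all $t \in U$. Hence $\widetilde{\rho}_t$ is a well-defined representation for each $t \in U$, and the family is continuous.

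With continuous lifts in hand, $t \mapsto \widetilde{\rho}_t(\lambda) \in \uSL$ is continuous, and hence so is the real-valued function $t \mapsto \widetilde{\rho}_t(\lambda)[2]$. By hypothesis $\rho_t(\lambda)$ is hyperbolic, so $\widetilde{\rho}_t(\lambda)$ is hyperbolic in $\uSL$. By the normal-form criterion recalled just before the lemma, the second coordinate of a hyperbolic element lies strictly inside one of the open intervals $(k\pi - \tfrac{\pi}{2},\, k\pi + \tfrac{\pi}{2})$, while the endpoints $k\pi \pm \tfrac{\pi}{2}$ correspond to elements whose image in $\SL$ has trace $\pm 2$, i.e., parabolic. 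Since $\widetilde{\rho}_t(\lambda)[2]$ is continuous in $t$ and never attains such a boundary value (as $\rho_t(\lambda)$ remains hyperbolic throughout $U$), it stays inside a single interval, so $k(\rho_t)$ is constant on $U$.

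The main obstacle I expect is justifying that the second coordinate $\widetilde{\rho}_t(\lambda)[2]$ is genuinely well-defined and continuous as a function of $t$ on all of $C$, given that the lift $\widetilde{\rho}_t$ is only a local choice. This is resolved by the observation already made in the excerpt: because $\lambda$ is null-homologous in the exterior, $\widetilde{\rho}_t(\lambda)$ is independent of the choice of lift, so different local lifts agree on $\lambda$ and the function $t \mapsto \widetilde{\rho}_t(\lambda)[2]$ glues into a well-defined continuous function on $C$. Combined with the local constancy above and the connectedness of $C$, this gives the lemma.
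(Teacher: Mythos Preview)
Your proof is correct and follows essentially the same route as the paper: both argue that $\widetilde{\rho}(\lambda)$ varies continuously inside the set of hyperbolic elements of $\uSL$, whose connected components are indexed precisely by $k$ via the condition $g[2]\in(k\pi-\tfrac{\pi}{2},k\pi+\tfrac{\pi}{2})$, so connectedness of $C$ forces $k$ to be constant. The paper's proof is a two-line version that invokes the decomposition of hyperbolic elements into components (citing \cite{vu}) without spelling out the continuous-lifting step; your version makes that step explicit via path-lifting of generators and the observation that $\widetilde{\rho}(\lambda)$ is lift-independent, which is a welcome clarification but not a genuinely different argument.
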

 
 \begin{proof} 
 Note that as $\rho$ varies in $C,$ the image $\widetilde{\rho}_0(\lambda)$ varies in a connected component of hyperbolic elements. Since each connected component corresponds to a single value of $k$ (see Figure 1 of \cite{vu}),  the number $k$ is the same for all representations in $C.$  
\end{proof}

We will also say that the curve $C$ in the above lemma has \emph{index} $k.$

\begin{corollary} \label{c1}
Let $C$ be a connected curve of hyperbolic representations of  a knot group $G(K)$ into $\SL.$ If $C$ contains a reducible representation then it has index $0.$  
\end{corollary}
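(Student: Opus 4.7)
The plan is to combine Lemma \ref{l1} (constancy of the index along $C$) with a computation of the continuous lift of the longitude near the reducible endpoint. Concretely, I would show that $\widetilde{\rho}_0(\lambda)[2] \in (-\pi/2, \pi/2)$ for the limiting reducible representation; combined with continuity and Lemma \ref{l1}, this will force the index $k$ to be $0$.

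First, pick a continuous path $\rho_t$ in $C$ with $\rho_1 = \rho_0$ reducible and $\rho_t$ hyperbolic for $t < 1$, and lift it continuously to a path of homomorphisms $\widetilde{\rho}_t \colon G(K) \to \uSL$ (using that $\psi\colon \uSL \to \SL$ is a covering). By Lemma \ref{l1}, for every $t < 1$ we have $\widetilde{\rho}_t(\lambda)[2] \in (k\pi - \tfrac{\pi}{2},\, k\pi + \tfrac{\pi}{2})$, where $k$ is the common index of $C$.

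Second, locate $\widetilde{\rho}_0(\lambda)$ precisely. After conjugation we may assume $\rho_0(G(K))$ lies in a Borel subgroup $B$ of $\SL$, and since $\lambda$ is null-homologous we have $\lambda \in [G(K), G(K)]$, so $\rho_0(\lambda) \in [B,B] = U$, the unipotent subgroup. The same abelianization remark shows $\widetilde{\rho}_0([G(K),G(K)]) \subset \psi^{-1}(B_0)$, where $B_0$ is the identity component of $B$. Crucially, $B_0$ is simply connected, so the inclusion $B_0 \hookrightarrow \SL$ lifts uniquely to a group embedding $B_0 \hookrightarrow \uSL$, giving a splitting $\psi^{-1}(B_0) \cong B_0 \times \BZ$ as a group with central $\BZ$-factor. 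Consequently, commutators in the image of $\widetilde{\rho}_0$ lying in $\psi^{-1}(B_0)$ all land in $[B_0,B_0]\times\{0\} \subset U\times\{0\}$. Applying the formula for $\psi^{-1}$ from Section 2.1 to $\begin{pmatrix} 1 & c \\ 0 & 1 \end{pmatrix}$, this embedded copy $U\times\{0\}$ consists of the elements $\bigl(\tfrac{ci}{2+ci},\, \arctan(c/2)\bigr)$, whose $\omega$-coordinate lies in $(-\pi/2, \pi/2)$. Hence $\widetilde{\rho}_0(\lambda)[2] \in (-\pi/2, \pi/2)$.

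Finally, continuity of $t \mapsto \widetilde{\rho}_t(\lambda)[2]$ forces $\widetilde{\rho}_t(\lambda)[2] \in (-\pi/2, \pi/2)$ for $t$ close to $1$, and this interval overlaps the index interval $(k\pi - \tfrac{\pi}{2}, k\pi + \tfrac{\pi}{2})$ only when $k=0$. The main obstacle is the second step: showing that $\widetilde{\rho}_0(\lambda)$ lives in the identity sheet of $\psi^{-1}(U)$ rather than in some translate $U \times \{n\}$ with $n \ne 0$. The group-theoretic splitting $\psi^{-1}(B_0) \cong B_0 \times \BZ$ provided by the simple connectivity of $B_0$ is precisely what turns this potential analytic subtlety into a clean algebraic statement, namely that commutators in a direct product with central second factor lie in the first factor.
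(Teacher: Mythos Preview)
Your argument is correct, but the paper takes a shorter and more elementary route. Instead of analysing the group structure of $\psi^{-1}(B_0)$, the paper simply deforms the reducible representation $\rho_1$, given by $\rho_1(\mu_i)=\begin{pmatrix} s & a_i\\ 0 & s^{-1}\end{pmatrix}$, to the \emph{abelian} representation $\rho_0$ along the linear path $\rho_t(\mu_i)=\begin{pmatrix} s & t a_i\\ 0 & s^{-1}\end{pmatrix}$; since $\rho_0$ factors through $H_1(X)\cong\BZ$, one lift of $\rho_0$ sends $\lambda$ to the identity $(0,0)$, and as all lifts agree on $\lambda$ this pins down the index as $0$, whence Lemma~\ref{l1} finishes. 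Your covering-space argument (simple connectivity of $B_0$ giving a splitting $\psi^{-1}(B_0)\cong B_0\times\BZ$ with central $\BZ$, so commutators land in $U\times\{0\}$) achieves the same conclusion without the auxiliary deformation, and has the merit of explaining \emph{structurally} why the lift of $\lambda$ sits on the identity sheet; the paper's method, by contrast, reduces everything to the transparent fact that an abelian lift kills the longitude. A minor point: your indexing ``$\rho_1=\rho_0$'' is garbled, and you should state explicitly that $\widetilde{\rho}_0(G(K))\subset\psi^{-1}(B)$ (not merely the commutator subgroup) so that the commutators you form really live inside the split subgroup.
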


\begin{proof} 
Suppose that $\rho_1\in C $ is a reducible hyperbolic representation given by $$\rho_1(\mu_i)=  \begin{pmatrix} s & a_i\\ 0 & s^{-1}    \end{pmatrix},$$ where $\mu_i$ are generators of $G(K)$ which are conjugate to the standard meridian. Then we can connect $\rho_1$ to an abelian representation by using the curve $$\rho_t(\mu_i)=  \begin{pmatrix} s & ta_i\\ 0 & s^{-1}    \end{pmatrix}, t\in \BR.$$  It is easy to verify that if $\rho_1$ is a representation then so is $\rho_t$ for all $t.$ As the abelian representation $\rho_0$ has index 0, the corollary follows from the previous lemma. 
\end{proof}

The next proposition tells us how to find left orderable slopes of a knot, given a connected curve of hyperbolic representations of the knot group into $\SL$.

\begin{proposition} \label{p1}
\begin{itemize}
\item[\textup{(1)}]  Let $\{\rho_y\colon G(K)\rightarrow \SL\}_y$ be a connected curve of hyperbolic representations of index $0$, and choose a lift $\widetilde{\rho}_y\colon G(K)\rightarrow \uSL$ such that  $\widetilde{\rho}_y(\mu)=(\tanh a(y),0)$ and 
$\widetilde{\rho}_y(\lambda)=(\tanh b(y),0).$  If $p/q$ is a slope such that $p/q = -b(y)/a(y)$ for some $y$, then  $p/q$ is a left orderable slope of $K$.  
\item[\textup{(2)}]
Let $\{\rho_y\colon G(K)\rightarrow \SL\}_y$ be a connected curve of hyperbolic representations of index $k\neq 0$, and choose a lift $\widetilde{\rho}_y\colon G(K)\rightarrow \uSL$ such that   $\widetilde{\rho}_y(\mu)=(\tanh a(y),0)$ and 
$\widetilde{\rho}_y(\lambda)=(\tanh b(y),k\pi).$ If $p/q$ is a slope such that $p/q = -b(y)/a(y)$ for some $y$ and $p| k$, then  $p/q$ is a left orderable slope of $K$.  
\end{itemize}
\end{proposition}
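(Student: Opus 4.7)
The plan is to produce a non-trivial homomorphism $\pi_1(S^3_{p/q}(K)) \to \uSL$ and invoke the Boyer--Rolfsen--Wiest theorem, noting that $\uSL$ acts faithfully on $\BR$ by orientation-preserving homeomorphisms. Since $\pi_1(S^3_{p/q}(K))$ is the quotient of $G(K)$ by the normal closure of $\mu^p\lambda^q$, it suffices to construct a lift $\widetilde{\rho}$ of $\rho_y$ satisfying $\widetilde{\rho}(\mu^p\lambda^q) = \id$.

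First, I would carry out the key computation inside $\uSL$. For elements of the form $(\gamma, j\pi)$ with $\gamma \in (-1,1)$ real and $j \in \BZ$, one has $e^{-2ij\pi} = 1$ and $1 + \gamma\gamma' > 0$, so $\arg(1 + \gamma\gamma') = 0$ and the multiplication rule from Section~2.1 collapses to
\[
(\gamma, j\pi)(\gamma', j'\pi) = \left(\frac{\gamma + \gamma'}{1 + \gamma\gamma'},\, (j+j')\pi\right).
\]
Combining this with the addition formula $\tanh(a)+\tanh(a') = \tanh(a+a')\bigl(1+\tanh(a)\tanh(a')\bigr)$ and induction yields $(\tanh a,\, j\pi)^m = (\tanh(ma),\, mj\pi)$ for $m \in \BZ$. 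Since $\mu$ and $\lambda$ commute in $G(K)$, applying this to the given lift I would obtain
\[
\widetilde{\rho}_y(\mu^p\lambda^q) = \bigl(\tanh(p\,a(y) + q\,b(y)),\, qk\pi\bigr).
\]

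For part (1), the index is $k = 0$, and the slope hypothesis $p\,a(y) + q\,b(y) = 0$ makes the above equal to $(0,0) = \id$. Hence $\widetilde{\rho}_y$ descends to a representation of $\pi_1(S^3_{p/q}(K))$. This representation is non-trivial because $\rho_y$ is non-abelian: an abelian representation would kill the commutator subgroup of $G(K)$ hence $\lambda$, contradicting that $\rho_y(\lambda)$ is hyperbolic.

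For part (2) with $k \ne 0$, the product above evaluates to $(0, qk\pi)$, which is central in $\uSL$ but not the identity, so the given lift does not descend. The remedy is to vary the lift within the $\BZ$-family $\widetilde{\rho}_{y,n} = \widetilde{\rho}_y \cdot h_n$ from Section~2.2: because $h_n$ kills $\lambda$ (null-homologous), $\widetilde{\rho}_{y,n}(\mu) = (\tanh a(y),\, n\pi)$ while $\widetilde{\rho}_{y,n}(\lambda)$ is unchanged, and the same computation yields $\widetilde{\rho}_{y,n}(\mu^p\lambda^q) = \bigl(0,\,(pn+qk)\pi\bigr)$. Since $\gcd(p,q) = 1$, the hypothesis $p\mid k$ is precisely the condition that $n := -qk/p$ is an integer; with this choice $\widetilde{\rho}_{y,n}(\mu^p\lambda^q) = \id$ and the modified lift descends, again giving a non-trivial (non-abelian) representation. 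The main obstacle is exactly this compatibility condition: without $p\mid k$, no lift in the $\BZ$-family can simultaneously trivialize the $\gamma$- and $\omega$-coordinates of $\mu^p\lambda^q$, so the argument genuinely uses the divisibility hypothesis.
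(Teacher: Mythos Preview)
Your argument is correct and follows the same route as the paper: compute $\widetilde{\rho}(\mu^p\lambda^q)$ in normal form and, for part~(2), pass to the lift with $\widetilde{\rho}(\mu)=(\tanh a(y),\,-\tfrac{qk}{p}\pi)$ to cancel the $\omega$-coordinate. Your treatment is actually more explicit than the paper's about the $\uSL$ multiplication and about non-triviality of the descended map; the one ingredient you omit and the paper supplies is irreducibility of $S^3_{p/q}(K)$, needed as a hypothesis for the Boyer--Rolfsen--Wiest criterion (the paper cites Hatcher--Thurston for the two-bridge knots under consideration).
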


\begin{proof} Let $X_{p/q}$ denote by 3-manifold obtained from $S^3$ by $p/q$-surgery along $K$.

(1) Since $\widetilde{\rho}_y(\mu^p\lambda^q)=(\tanh(pa(y)+qb(y)),0)=(0,0),$ $\widetilde{\rho}_y$ gives a representation from $\pi_1(X_{p/q})$ to $\uSL$. Note that $X_{p/q}$ is an irreducible 3-manifold (by \cite{HTh}) and $\widetilde{\mathrm{SL}_2(\BR)}$ is a left orderable group (by \cite{Be}). Hence, by Theorem 1.1 of \cite{BRW}, $\pi_1(X_{p/q})$ is a left orderable group. This means that $p/q$ is a left orderable slope of $K$.  
	
(2) We choose another lift $\widetilde{\rho_y}': G(K)\rightarrow \uSL$ such that $\widetilde{\rho_y}'(\mu)=(\tanh a(y),-\frac{kq}{p}\pi)$ and 
$\widetilde{\rho_y}'(\lambda)=(\tanh b(y),k\pi).$ 
 We then have $$\widetilde{\rho_y}'(\mu^p\lambda^q)=(\tanh(pa(y)+qb(y)),-kq\pi+kq\pi)=(0,0).$$
 Therefore $\widetilde{\rho_y}'$ gives a representation from $\pi_1(X_{p/q})$ to $\uSL$ and, as in (1), the assertion follows. 
\end{proof}

 \section{Representations of double twist knots}

Consider the two-bridge knot/link $C(k, l)$ in the Conway notation, where $k, l$ are integers such that $|kl| \ge 3$. Note that $C(k, l)$ is the rational knot/link corresponding to continued fraction $k+1/l$. It is easy to see that $C(k,l)$ is the mirror image of $C(l,k)=C(-k,-l)$. Moreover, $C(k, l)$ is a knot if $kl$ is even and is a two-component link if $kl$ is odd. In this paper, we only consider knots and so we can assume that $k>0$ and $l=-2p$ is even. 

Note that $C(k, -2p)$ is the mirror image of the double twist knot $J(k,2p)$ in \cite{HS}.  Then, by \cite{HS}, the knot group of $C(k, -2p)$ has a presentation 
\[
G(C(k, -2p)) = \la a, b \mid a w^{p}  =  w^{p} b\ra
\]
where $a, b$ are meridians and 
\[
w = \begin{cases} (ab^{-1})^m(a^{-1}b)^m &\mbox{if } k=2m, \\ 
(ab^{-1})^m ab (a^{-1}b)^m & \mbox{if } k=2m+1. \end{cases}
\]
Moreover, the canonical longitude of $C(k, -2p)$ corresponding to the meridian $\mu=a$ is $\lambda = (w^{p} (w^{p})^*a^{-2\varepsilon})^{-1}$, where $\varepsilon =0$ if $k=2m$ and $\varepsilon =2p$ if $k=2m+1$. Here, for a word $u$ in the letters $a,b$ we let $u^*$ be the word obtained by reading $u$ backwards. 

Suppose $\rho\colon G(C(k, -2p)) \to \mathrm{SL}_2(\BC)$ is a nonabelian representation. 
Up to conjugation, we may assume that 
\begin{equation} \label{repn}
\rho(a) = \left[ \begin{array}{cc}
M& 1 \\
0 & M^{-1} \end{array} \right] \quad \text{and} \quad 
\rho(b) = \left[ \begin{array}{cc}
M & 0 \\
2 - y & M^{-1} \end{array} \right]
\end{equation}
where $(M,y) \in \BC^2$ satisfies the matrix equation $\rho(a w^p) = \rho(w^p b)$. It is known that this matrix equation is equivalent to a single polynomial equation $R_{C(k, -2p)}(x,y) =0$, where $x= (\tr \rho(a))^2$ and $R_K(x,y)$ is the Riley polynomial of a two-bridge knot $K$, see \cite{Ri}. This polynomial can be described via the Chebychev polynomials as follows. 

Let $\{S_j(v)\}_{j \in \BZ}$ be the Chebychev polynomials in the variable $v$ defined by $S_0(v) = 1$, $S_1(v) = v$ and $S_j(v) = v S_{j-1}(v) - S_{j-2}(v)$ for all integers $j$. Note that $S_{j}(v) = - S_{-j-2}(v)$ and $S_j(\pm2) = (\pm 1)^j (j+1)$. Moreover, we see
that $S_j(v) = (s^{j+1} - s^{-(j+1)})/(s - s^{-1})$ for $v = s + s^{-1} \not= \pm 2$
from the recurrence relation. Using these identities one can prove the following.

\begin{lemma} \label{chev}
We have
\begin{enumerate}
\item   $S^2_j(v) - v S_j(v)S_{j-1}(v) + S^2_{j-1}(v) =1$ for any integer $j$,
\item $S_n(v)=\prod_{j=1}^n (v-2\cos \frac{j\pi}{n+1})$ for any positive integer $n$, 
\item $S_n(v)-S_{n-1}(v)=\prod_{j=1}^n (v-2\cos \frac{(2j-1)\pi}{2n+1})$ for any positive integer $n$.
\end{enumerate}
\end{lemma}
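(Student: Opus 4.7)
The plan is to leverage the closed-form expression $S_j(v) = (s^{j+1} - s^{-(j+1)})/(s - s^{-1})$ valid when $v = s + s^{-1} \neq \pm 2$, which is recorded in the paper just before the lemma, together with its trigonometric version $S_j(2\cos\theta) = \sin((j+1)\theta)/\sin\theta$. Each of the three identities is a polynomial identity in $v$, so it is enough to verify it on a dense subset where these formulas apply.

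For part (1), the cleanest route is induction on $j \ge 1$. The base case $j=1$ reads $v^2 - v \cdot v \cdot 1 + 1 = 1$. For the inductive step I would apply $S_{j+1} = v S_j - S_{j-1}$ to expand $S_{j+1}^2 - v S_{j+1} S_j + S_j^2$; after cancellation this collapses to $S_j^2 - v S_j S_{j-1} + S_{j-1}^2$, which equals $1$ by the inductive hypothesis. The identity for $j \le 0$ follows either from a backward induction using $S_{j-2} = v S_{j-1} - S_j$ or by invoking the symmetry $S_j = -S_{-j-2}$ that the paper has just noted, which preserves the quadratic form $X^2 - vXY + Y^2$ under $(S_j, S_{j-1}) \leftrightarrow (S_{-j-2}, S_{-j-1})$ up to an overall sign that cancels.

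For parts (2) and (3), I would observe that both $S_n(v)$ and $S_n(v) - S_{n-1}(v)$ are monic polynomials of degree $n$ (the latter because $S_n$ has leading term $v^n$ while $S_{n-1}$ has degree $n-1$), so each is determined by its $n$ roots. Substituting $v = 2\cos\theta$ with $\theta \in (0,\pi)$ gives
\[
S_n(2\cos\theta) = \frac{\sin((n+1)\theta)}{\sin\theta}, \qquad S_n(2\cos\theta) - S_{n-1}(2\cos\theta) = \frac{\cos\bigl((2n+1)\theta/2\bigr)}{\cos(\theta/2)},
\]
where the second identity follows from the sum-to-product formula $\sin((n+1)\theta) - \sin(n\theta) = 2\cos((2n+1)\theta/2)\sin(\theta/2)$ combined with $\sin\theta = 2\sin(\theta/2)\cos(\theta/2)$. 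The zeros of the first expression on $(0,\pi)$ are $\theta = j\pi/(n+1)$ for $j = 1, \dots, n$, and the zeros of the second are $\theta = (2j-1)\pi/(2n+1)$ for $j = 1, \dots, n$. In each case these $n$ values of $\theta$ give $n$ distinct values $v = 2\cos\theta$ because $\cos$ is strictly monotone on $(0,\pi)$, so these are exactly the roots of the respective monic polynomials, proving the product formulas.

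I do not anticipate any real obstacle: the whole lemma is a compact record of standard Chebyshev identities. The only points needing a moment's care are the degree count that makes the root-list determination in (2) and (3) rigorous, and the extension of (1) to non-positive $j$ — both routine once the symmetry $S_j = -S_{-j-2}$ is in hand.
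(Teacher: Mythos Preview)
Your proposal is correct and follows essentially the same approach as the paper: part~(1) via the recurrence to telescope the quadratic form down to the base case, and parts~(2)--(3) by identifying all $n$ roots of a monic degree-$n$ polynomial through the closed form. The only cosmetic difference is that you parametrize $v = 2\cos\theta$ and use sum-to-product identities, whereas the paper keeps $v = s + s^{-1}$ and factors $S_n(v) - S_{n-1}(v)$ algebraically as $\dfrac{s^{-(n+1)}}{s-s^{-1}}(s-1)(s^{2n+1}+1)$ before reading off the roots from $s^{2n+1}+1=0$; these are the same computation in different coordinates.
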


\begin{proof}
(1) By the recurrence relation,
\begin{align*}
S_j^2(v)-vS_j(v)S_{j-1}+S_{j-1}^2(v)&= S_j(v)(S_j(v)-vS_{j-1}(v))+S_{j-1}^2(v)\\
&= (vS_{j-1}(v)-S_{j-2}(v))(-S_{j-2}(v))+S_{j-1}^2(v)\\
&=S_{j-1}^2(v)-vS_{j-1}(v)S_{j-2}(v)+S_{j-2}^2(v).
\end{align*}
Since $S_1(v)^2-vS_1(v)S_0(v)+S_0^2(v)=1$, we have the conclusion.

(2)
For any positive integer $n$, $S_n(v)$ is a polynomial of degree $n$.
Since $\pm 2$ are not roots of $S_n(v)$,
all roots come from solving  $s^{n+1}-s^{-(n+1)}=0$, where $v = s+s^{-1}$.
Hence, the conclusion follows from the observation that $2\cos j\pi/(n+1)\ (j=1,2,\dots,n)$
give all roots of $S_n(v)$.

(3)
For any positive integer $n$, $S_n(v)-S_{n-1}(v)$ is a polynomial of degree $n$, and
\begin{align*}
S_n(v)-S_{n-1}(v)&=\frac{s^{n+1}-s^{-(n+1)} }{s-s^{-1}}-\frac{s^{n}-s^{-n}}{s-s^{-1}}\\
&=\frac{s^{n+1}-s^{-(n+1)}-s^n+s^{-n}}{s-s^{-1}}\\
&= \frac{s^{-(n+1)}}{s-s^{-1}}\cdot (s^{2n+2}-1-s^{2n+1}+s)\\
&=\frac{s^{-(n+1)}}{s-s^{-1}}(s-1)(s^{2n+1}+1).
\end{align*}
Hence all roots of $S_n(v)-S_{n-1}(v)$ come from solving $s^{2n+1}+1=0$.
That is, $2 \cos (2j-1)\pi/(2n+1)\ (j=1,2,\dots,n)$ give all the roots.
\end{proof}

The Riley polynomial of $C(k, -2p)$, whose zero locus describes all non-abelian representations of the knot group of $C(k, -2p)$ into $\mathrm{SL}_2(\BC)$, is
\[
R_{C(k, -2p)}(x,y)=S_{p}(t) - z S_{p-1}(t)
\]
where 
\begin{eqnarray*}
t &=& \tr \rho(w) = \begin{cases} 2+ (y+2-x)(y-2) S^2_{m-1}(y) &\mbox{if } k=2m, \\ 
2- (y+2-x)(S_m(y)-S_{m-1}(y))^2 & \mbox{if } k=2m+1, \end{cases}
\end{eqnarray*}
and
\begin{eqnarray*}
z &=& \begin{cases} 1 + (y+2-x) S_{m-1}(y) (S_{m}(y) - S_{m-1}(y)) &\mbox{if } k=2m, \\ 
1 - (y+2-x) S_{m}(y) (S_{m}(y) - S_{m-1}(y))  & \mbox{if } k=2m+1. \end{cases}
\end{eqnarray*}
Moreover, for the representation $\rho\colon G(C(k, -2p)) \to \mathrm{SL}_2(\BC)$ of the form \eqref{repn} the image of the canonical longitude $\lambda =  (w^{p} (w^{p})^*a^{-2\varepsilon})^{-1}$ has the form 
$\rho(\lambda) = \left[ \begin{array}{cc}
L & * \\
0 & L^{-1} \end{array} \right]$, where 
\[
L = - \frac{M^{-1}(S_m(y) - S_{m-1}(y)) - M(S_{m-1}(y) - S_{m-2}(y))} {M(S_m(y) - S_{m-1}(y)) - M^{-1} (S_{m-1}(y) - S_{m-2}(y))} \quad  \text{if~}  k=2m
\]
and  
\[
L = - M^{4p} \frac{M^{-1}S_m(y) - M S_{m-1}(y)}{M S_m(y) - M^{-1} S_{m-1}(y)} \quad \text{if~}  k=2m+1.
\]
See e.g. \cite{Tr, Pe}.

\section{The case of $C(2m, \pm 2n)$}

In this section we prove the existence of continuous families of  hyperbolic $\mathrm{SL}_2(\BR)$-representations of knot groups of $C(2m, \pm 2n)$ and hence fix the gaps in \cite{HT-genus1, Tr}.

\begin{proposition} \label{solution-even1}
There exist $n-1$ continuous real functions $x_j\colon (2, \infty) \to (0, \infty)$, where $1 \le j \le n-1$, in the variable $y$ such that $R_{C(2m,2n)}(x_j(y),y)=0$ and 
\[
y+2 + \frac{4\sin^2 \frac{(2j-1)\pi}{4n+2}}{(y-2) S^2_{m-1}(y)} < x_j(y) < y+2 + \frac{4\sin^2 \frac{(2j+1)\pi}{4n+2}}{(y-2) S^2_{m-1}(y)} 
\]
for all $y > 2$. 
\end{proposition}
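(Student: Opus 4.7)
My plan is to transfer $R_{C(2m,2n)}(x,y)=0$ to the coordinate $t = 2 + (y+2-x)(y-2)S_{m-1}^2(y)$; for each fixed $y>2$ this is an affine bijection in $x$, and under it the $x$-intervals of the proposition become the open intervals $I_j := (2\cos\tfrac{(2j+1)\pi}{2n+1},\,2\cos\tfrac{(2j-1)\pi}{2n+1})$ in $t$, which by Lemma \ref{chev}(3) are the gaps between consecutive roots of $S_n(t)-S_{n-1}(t)$. Substituting $p=-n$ in the Riley formula of Section 3 and using $S_j(v)=-S_{-j-2}(v)$ gives $R_{C(2m,2n)}(x,y) = zS_{n-1}(t)-S_{n-2}(t)$ up to sign. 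Rewriting $z$ in the new coordinate yields $z = 1 + \alpha(y)(t-2)$ with $\alpha(y):= (S_m(y)-S_{m-1}(y))/[(y-2)S_{m-1}(y)]$, which is strictly positive for $y>2$ by Lemma \ref{chev}. Let $P(t,y)$ denote this expression in $(t,y)$; it is a polynomial in $t$ of degree exactly $n$ with leading coefficient $\alpha(y)>0$.

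At each endpoint $t_k := 2\cos\tfrac{(2k-1)\pi}{2n+1}$ of the $I_j$'s one has $S_n(t_k)=S_{n-1}(t_k)$, so the three-term recurrence forces $S_{n-2}(t_k)=(t_k-1)S_{n-1}(t_k)$, and a short calculation (using $S_m - yS_{m-1} + S_{m-2}=0$ to simplify $z-t+1$) produces
\[
P(t_k,y) = \beta(y)(t_k-2)\,S_{n-1}(t_k),\qquad \beta(y):=\frac{S_{m-1}(y)-S_{m-2}(y)}{(y-2)S_{m-1}(y)}.
\]
For $y>2$, $\beta(y)>0$ (Lemma \ref{chev}(3)) and $t_k<2$, so the sign of $P(t_k,y)$ is opposite to that of $S_{n-1}(t_k)$. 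Moreover $S_{n-1}(t_k)$ alternates with $k$: the strict inequalities $\tfrac{2k-1}{2n+1}<\tfrac{k}{n}<\tfrac{2k+1}{2n+1}$ for $1\le k\le n-1$ place the root $2\cos(k\pi/n)$ of $S_{n-1}$ in $(t_{k+1},t_k)$; at most one such root fits because the angular spacing $\pi/n$ between roots of $S_{n-1}$ exceeds the angular width $2\pi/(2n+1)$ of each $I_j$; and $\gcd(n,2n+1)=1$ rules out any $t_k$ being a root of $S_{n-1}$. Hence $P(t_j,y)$ and $P(t_{j+1},y)$ have opposite signs, and the intermediate value theorem yields at least one zero $t_j(y)\in I_j$ of $P(\cdot,y)$ for each $1\le j\le n-1$.

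To pin down uniqueness and continuity I count real roots of $P(\cdot,y)$. Besides the $n-1$ zeros in $\bigcup_j I_j$, the same formula gives $P(t_1,y)=\beta(y)(t_1-2)S_{n-1}(t_1)<0$ (since $S_{n-1}(t_1)>0$), while $P(2,y)=n-(n-1)=1>0$, and this furnishes a further root in $(t_1,2)$. These $n$ regions being pairwise disjoint and $\deg_tP=n$, all real roots are accounted for and are simple, with exactly one $t_j(y)$ per $I_j$. Simplicity yields $\partial_tP(t_j(y),y)\ne 0$, so the implicit function theorem applied to the real analytic $P$ makes each $t_j$ real analytic, and in particular continuous, on $(2,\infty)$. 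Inverting the change of coordinates gives the continuous $x_j(y)=y+2-(t_j(y)-2)/[(y-2)S_{m-1}^2(y)]$, which lies strictly between the two stated bounds and automatically satisfies $x_j(y)>y+2>4$. The main difficulty is the endpoint sign computation—requiring the extra application of the Chebyshev recurrence needed to rewrite $z-t+1$ as $\beta(y)(t-2)$—together with the degree count that upgrades IVT existence to simplicity, and hence to continuity via the implicit function theorem.
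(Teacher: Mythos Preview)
Your proof is correct and follows essentially the same route as the paper's: evaluate the Riley polynomial at the special points $t=t_k=2\cos\frac{(2k-1)\pi}{2n+1}$ (equivalently $x=s_k(y)$), use interlacing with the roots of $S_n$ (equivalently $S_{n-1}$, since $S_n(t_k)=S_{n-1}(t_k)$) to get sign alternation, pick up the extra root between $t=t_1$ and $t=2$ (equivalently between $x=y+2$ and $x=s_1(y)$), and then invoke the degree count plus the implicit function theorem for simplicity and continuity. The only differences are cosmetic---you work in the $t$-coordinate and phrase $z=1+\alpha(y)(t-2)$ explicitly, and your ``up to sign'' hedge is unnecessary since $R_K=zS_{n-1}(t)-S_{n-2}(t)$ holds exactly.
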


\begin{proof}
Let $K=C(2m,2n)$. We have $R_K(x,y)=S_{-n}(t) - z S_{-n-1}(t)$
where 
\begin{eqnarray*}
t &=& 2+ (y+2-x)(y-2) S^2_{m-1}(y), \\
z &=& 1 + (y+2-x) S_{m-1}(y) (S_{m}(y) - S_{m-1}(y)).
\end{eqnarray*}
Note that $R_K(x,y)= (t - z) S_{-n-1}(t) - S_{-n-2}(t) = S_{n}(t) - (t-z) S_{n-1}(t)$. 

Let $t_j=2\cos \frac{(2j-1)\pi}{2n+1}$ for $j = 1, \dots, n$. 
Then, 
Lemma \ref{chev}(3) gives  $S_n(t)-S_{n-1}(t) = \prod_{j=1}^n (t-t_j)$,
and the signs of $S_n(t_j)$ change alternately as  $S_n(t_1)>0, S_n(t_2)<0, \dots$, 
because of the inequality 
\[
\frac{j-1}{n+1}<\frac{2j-1}{2n+1}<\frac{j}{n+1}\ (j=1,2,\dots,n)
\]
and Lemma \ref{chev}(2).


Fix a real number $y > 2$.  Let $s_j(y) = y+2 + \frac{2-t_j}{(y-2) S^2_{m-1}(y)}$ for $j = 1, \dots, n$. Since $-2 < t_{n} < \dots < t_1 < 2$, we have $s_{n}(y) > \dots > s_1(y) > y+2$. At $x=s_{j}(y)$ we have $t=t_{j}$ and so $S_n(t)=S_{n-1}(t)$. This implies that 
\begin{eqnarray*}
R_K(s_{j}(y),y) &=& (1-(t-z))S_n(t_{j}) \\
&=&(y+2-s_{j}(y))S_{m-1}(y) ( S_{m-1}(y) - S_{m-2}(y) )S_n(t_{j}) \\
&=&- \frac{2-t_j}{(y-2) S_{m-1}(y)}  \, (S_{m-1}(y) - S_{m-2}(y) )S_n(t_{j}).
\end{eqnarray*}
Since $y > 2$, we have $S_{m-1}(y) - S_{m-2}(y) >0$ and $S_{m-1}(y) > 0$ by Lemma \ref{chev}. 
Hence $R_K(s_j(y),y)$ and $S_n(t_j)$ have opposite signs, so
the sign of $R_K(s_j(y),y)$ changes alternately as
$R_K(s_1(y),y)<0, R_K(s_2(y),y)>0,\dots$.

For each $1 \le j \le n-1$, since $R_K(s_{j}(y),y) R_K(s_{j+1}(y),y)<0$,
there exists $x_j(y) \in (s_{j}(y), s_{j+1}(y))$ such that $R_K(x_j(y), y)=0.$ 
Also, since $R_K(y+2,y)=1$ and $R_K(s_1(y), y) < 0$, there exists $x_0(y) \in (y+2, s_1(y))$ such that $R_K(x_0(y), y)=0$. 

Since $R_K(x,y) = S_{n}(t) - (t-z) S_{n-1}(t) = z S_{n-1}(t) - S_{n-2}(t)$, 
we see that $R_K(x,y)$ is a polynomial of degree $n$ in $x$ for each fixed real number $y > 2$. This polyomial has exactly $n$  simple real roots $x_0(y), \dots, x_{n-1}(y)$ satisfying $x_{n-1}(y) >  \dots > x_0(y) > y+2$, hence the implicit function theorem implies that each $x_j(y)$ is a continuous function in $y > 2$.  The continuous functions $x_1(y), \dots, x_{n-1}(y)$ satisfy the conditions of Proposition \ref{solution-even1}. 
\end{proof}

\begin{proposition} \label{solution-even2}
There exist $n-1$ continuous real functions $x_j\colon (2, \infty) \to (0, \infty)$, where $1 \le j \le n-1$, in the variable $y$ such that $R_{C(2m,-2n)}(x_j(y),y)=0$ and 
\[
y+2 + \frac{4\sin^2 \frac{(2j-1)\pi}{4n+2}}{(y-2) S^2_{m-1}(y)} < x_j(y) < y+2 + \frac{4\sin^2 \frac{(2j+1)\pi}{4n+2}}{(y-2) S^2_{m-1}(y)} 
\] 
for all $y > 2$. 
\end{proposition}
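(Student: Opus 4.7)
The proof strategy parallels that of Proposition \ref{solution-even1}. For $K = C(2m,-2n)$ the Riley polynomial is $R_K(x,y) = S_n(t) - z S_{n-1}(t)$ (the $p = n$ case of the formula from Section 3), with the same $t$ and $z$ as for $C(2m, 2n)$. I would reuse the auxiliary quantities $t_j = 2\cos\frac{(2j-1)\pi}{2n+1}$ for $j = 1, \ldots, n$ and
\[
s_j(y) \;=\; y+2 + \frac{2-t_j}{(y-2)S^2_{m-1}(y)} \;=\; y+2 + \frac{4\sin^2\frac{(2j-1)\pi}{4n+2}}{(y-2)S^2_{m-1}(y)},
\]
so that $t = t_j$ exactly when $x = s_j(y)$, and $y+2 < s_1(y) < \cdots < s_n(y)$ for $y > 2$. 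By Lemma \ref{chev}(3), $S_n(t_j) = S_{n-1}(t_j)$, and the signs of $S_n(t_j)$ alternate as $j$ increases, starting positive.

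The key step is the evaluation
\[
R_K(s_j(y),y) \;=\; \bigl(1-z\bigr)\bigr|_{x=s_j(y)}\, S_n(t_j) \;=\; \frac{(2-t_j)\bigl(S_m(y) - S_{m-1}(y)\bigr)}{(y-2)\,S_{m-1}(y)}\, S_n(t_j),
\]
obtained by substituting $y+2-s_j(y) = -(2-t_j)/((y-2)S^2_{m-1}(y))$ into the formula for $z$. For $y > 2$ the prefactor is strictly positive (using $S_{m-1}(y) > 0$ and $S_m(y) - S_{m-1}(y) > 0$, which follow from Lemma \ref{chev}), so $R_K(s_j(y),y)$ has the same sign as $S_n(t_j)$ and therefore alternates starting with $+$. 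The intermediate value theorem then produces a root $x_j(y) \in (s_j(y), s_{j+1}(y))$ for each $1 \le j \le n-1$.

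The structural contrast with Proposition \ref{solution-even1} is that $R_K(y+2, y) = S_n(2) - S_{n-1}(2) = 1 > 0$ has the same sign as $R_K(s_1(y), y)$, so there is no extra sign change on $(y+2, s_1(y))$ and I recover only $n-1$ real roots, while $R_K(\cdot,y)$ is still a polynomial of degree $n$ in $x$ (with nonzero leading coefficient for $y > 2$, by a computation analogous to the previous case). To invoke the implicit function theorem I need each $x_j(y)$ to be a simple root. I would establish this by parity: each $x_j(y)$ arises from a sign change, so has odd multiplicity; if any had multiplicity $\ge 3$, the $n-1$ distinct roots would contribute total multiplicity $\ge n+1$, contradicting $\deg_x R_K = n$. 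Hence every $x_j(y)$ is simple, and the implicit function theorem delivers the continuous functions $x_j\colon (2,\infty) \to (0,\infty)$ with the claimed bounds.

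The one step requiring care beyond Proposition \ref{solution-even1} is exactly this simplicity argument: there, all $n$ roots were exhibited by sign changes so simplicity was automatic from the degree count, whereas here the absence of a sign change between $y+2$ and $s_1(y)$ leaves one root of $R_K(\cdot,y)$ unaccounted for, and the parity count above is needed to compensate.
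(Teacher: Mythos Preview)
Your argument is correct, and it follows the paper's proof up through the alternating-sign computation at the points $s_j(y)$ and the resulting roots $x_j(y)\in(s_j(y),s_{j+1}(y))$ for $1\le j\le n-1$. The divergence is in how simplicity of these $n-1$ roots is secured. The paper locates the remaining $n$th root explicitly: it shows $R_K(0,y)<0$ by writing $y=\xi+\xi^{-1}$ with $\xi>1$ and computing (via the auxiliary Claim~\ref{cl:even2}) that $R_K(0,y)=-\xi^{-2mn}(\xi^{4mn}-\xi)/(\xi-1)$, which together with $R_K(y+2,y)=1$ produces a root $x_0(y)\in(0,y+2)$; with $n$ distinct real roots of a degree-$n$ polynomial in hand, simplicity is immediate. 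Your parity/degree count bypasses this computation entirely: since each of the $n-1$ disjoint intervals carries a sign change, the total root multiplicity inside them is at least $n-1$ and odd in each interval, so a multiplicity $\ge 3$ anywhere would force total multiplicity $\ge n+1$, contradicting $\deg_x R_K=n$. (One small tightening: it is really the \emph{total} multiplicity in each interval that is odd, not a priori the multiplicity of the chosen $x_j(y)$; but the same count then forces exactly one simple root per interval, which is what you need.) Your route is shorter and avoids the $\xi$-calculation; the paper's route has the side benefit of actually pinning down where the extra root lives, namely in $(0,y+2)$, though that information is not used in the statement of the proposition.
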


\begin{proof}
Let $K=C(2m,-2n)$. We have $R_K(x,y)=S_{n}(t) - z S_{n-1}(t)$
where 
\begin{eqnarray*}
t &=& 2+ (y+2-x)(y-2) S^2_{m-1}(y), \\
z &=& 1 + (y+2-x) S_{m-1}(y) (S_{m}(y) - S_{m-1}(y)).
\end{eqnarray*}

Fix a real number $y  \ge 2$. Choose $t_j$ and $s_j(y)$ for $1 \le j \le n$ as in the proof of Proposition \ref{solution-even1}. 
Recall $S_n(t)=S_{n-1}(t)$ at $t=t_j$.
Since 
\begin{eqnarray*}
R_K(s_{j}(y),y) &=& (1-z)S_n(t_{j}) \\
&=& -(y+2-s_{j}(y))S_{m-1}(y) ( S_{m}(y) - S_{m-1}(y) )S_n(t_{j}) \\
&=& \frac{2-t_j}{(y-2) S_{m-1}(y)}  \, (S_{m}(y) - S_{m-1}(y) )S_n(t_{j}),
\end{eqnarray*}
$R_K(s_j(y),y)$ and $S_n(t_j)$ have the same sign.
So, the sign of $R_K(s_j(y),y)$ changes alternately.
 Since $R_K(s_{j}(y),y) R_K(s_{j+1}(y),y)<0$, there exists $x_j(y) \in (s_{j}(y), s_{j+1}(y))$ such that $R_K(x_j(y), y)=0$ for each $1 \le j \le n-1$. 

We now claim that there exists $x_0(y) \in (0, y+2)$ such that $R_K(x_0(y), y)=0$. Indeed, at $x=0$ we have $t=2+(y^2-4)S^2_{m-1}(y)$ and $z= 1 + (y+2) S_{m-1}(y) (S_{m}(y) - S_{m-1}(y))$. Write $y = \xi + \xi^{-1}$ for some $\xi>1$. 
Then $S_k(y) = \frac{\xi^{k+1} - \xi^{-(k+1)}}{\xi - \xi^{-1}}$ for all integers $k$.

\begin{claim}\label{cl:even2}
\begin{enumerate}
\item
$t = \xi^{2m} + \xi^{-2m}$.
\item
$z = \xi^{-2m} \, \frac{\xi^{4m+1} -1}{\xi-1}$.
\end{enumerate}
\end{claim}

\begin{proof}[Proof of Claim \ref{cl:even2}]
(1) follows from
\begin{align*}
t&=2+(y^2-4)S_{m-1}^2(y)\\
&=2+(\xi-\xi^{-1})^2 \left( \frac{\xi^m-\xi^{-m}}{\xi-\xi^{-1}} \right)^2\\
&=2+(\xi^m-\xi^{-m})^2\\
&=\xi^{2m}+\xi^{-2m}.
\end{align*}

(2) Similarly,
\begin{align*}
z&=1+(\xi+\xi^{-1}+2)\frac{\xi^m-\xi^{-m}}{\xi-\xi^{-1}}
\left(   
\frac{\xi^{m+1}-\xi^{-(m+1)}}{\xi-\xi^{-1}}-\frac{\xi^m-\xi^{-m}}{\xi-\xi^{-1}}
\right)\\
&=\frac{1}{(\xi-\xi^{-1})^2}
\bigl(
1+(\xi+\xi^{-1}+2)(\xi^m-\xi^{-m})
(\xi^{m+1}-\xi^{-(m+1)}-\xi^m+\xi^{-m})
\bigr)\\
&=\frac{1}{(\xi-\xi^{-1})^2}\left(
\xi^{2m+2}+\xi^{-(2m+2)}+\xi^{2m+1}+\xi^{-(2m+1)}-\xi^{2m}-\xi^{-2m}-\xi^{2m-1}-\xi^{-(2m-1)}
\right)\\
&=\frac{\xi^2-1}{(\xi-\xi^{-1})^2}
(
\xi^{2m}+\xi^{2m-1}-\xi^{-(2m+1)}-\xi^{-(2m+2)}
)\\
&=\frac{\xi^2}{\xi^2-1} \xi^{-(2m+2)}
(
\xi^{4m+2}+\xi^{4m+1}-\xi-1
)\\
&=\xi^{-2m}\frac{\xi^{4m+1}-1}{\xi-1}.
\end{align*}
\end{proof}

By Claim \ref{cl:even2}(1), $S_k(t) = \frac{\xi^{2m(k+1)} - \xi^{-2m(k+1)}}{\xi^{2m} - \xi^{-2m}}$ for all integers $k$. 
Hence we have
\begin{align*}
R_K(0,y) &= S_n(t)-zS_{n-1}(t)\\
&= \frac{\xi^{2m(n+1)} - \xi^{-2m(n+1)}}{\xi^{2m} - \xi^{-2m}} - \xi^{-2m} \frac{\xi^{4m+1} -1}{\xi-1}  \frac{\xi^{2mn} - \xi^{-2mn}}{\xi^{2m} - \xi^{-2m}}  \\
&=\frac{1}{\xi^{2m}-\xi^{-2m}}
\left(
(\xi^{2m(n+1)}-\xi^{-2m(n+1)})-\xi^{-2m}\frac{\xi^{4m+1}-1}{\xi-1}(\xi^{2mn}-\xi^{-2mn})
\right)\\
&=
\frac{\xi^{-2mn}}{\xi^{2m}-\xi^{-2m}}
\left(
(\xi^{2m(2n+1)}-\xi^{-2m})-\xi^{-2m} \frac{\xi^{4m+1}-1}{\xi-1}(\xi^{4mn}-1)
\right)
\\
&=\frac{\xi^{-2mn}}{\xi^{2m}-\xi^{-2m}}
\frac{1}{\xi-1}
(
\xi^{4mn+2m}-\xi^{4mn-2m}-\xi^{2m+1}+\xi^{-(2m-1)}
)
\\
&= - \xi^{-2mn} \, \frac{\xi^{4mn} -\xi}{\xi-1} <0.
\end{align*}

Since $R_K(y+2,y)=1$, there exists $x_0(y) \in (0, y+2)$ such that $R_K(x_0(y), y)=0$.  

By writing $R_K(x,y) = S_{n}(t) - z S_{n-1}(t) = (t-z) S_{n-1}(t)- S_{n-2}(t)$ and noting that
\[
t- z = 1+ (y+2-x) S_{m-1}(y) (-S_{m-1}(y) + S_{m-2}(y)),
\]
we see that $R_K(x,y)$ is a polynomial of degree $n$ in $x$ for a fixed real number $y > 2$. This polynomial has exactly $n$ simple real roots $x_0(y), \dots, x_{n-1}(y)$ satisfying $x_{n-1}(y) >  \dots > x_1(y) > y+2 > x_0(y) > 0$, hence the implicit function theorem implies that each $x_j(y)$ is a continuous function in $y > 2$. The continuous functions $x_1(y), \dots, x_{n-1}(y)$ satisfy the conditions of Proposition \ref{solution-even2}.
\end{proof}

\section{The case of $C(2m+1, -2n)$}

In this section we prove the existence of continuous families of hyperbolic $\mathrm{SL}_2(\BR)$-representations of the knot groups of $C(2m+1, -2n)$ and use it to give a proof of Theorem \ref{main}.

\subsection{Real roots of the Riley polynomial}
\begin{proposition} \label{solution-odd}
There exists a unique continuous real function $x\colon (2 \cos  \frac{\pi}{2m+1}, \infty) \to (0, \infty)$ in the variable $y$ such that $R_{C(2m+1,-2n)}(x(y),y)=0$ and $x(y) > y+2$ for all $y > 2 \cos  \frac{\pi}{2m+1}$. 
\end{proposition}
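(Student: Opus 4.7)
The plan is to reduce $R_{C(2m+1,-2n)}(x,y)=0$ on the region $x>y+2$ to a single univariate polynomial equation $P(s)=0$ on $s>1$, and then to extract existence, uniqueness, and continuity of its root from Descartes' rule of signs combined with the intermediate value theorem and the implicit function theorem.

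First, on the domain $y>2\cos\tfrac{\pi}{2m+1}$, I verify that $S_{m-1}(y)$, $S_m(y)$, and $S_m(y)-S_{m-1}(y)$ are all strictly positive. By Lemma \ref{chev}(2)(3) their largest real roots are $2\cos\tfrac{\pi}{m}$, $2\cos\tfrac{\pi}{m+1}$, and $2\cos\tfrac{\pi}{2m+1}$ respectively, and the first two are strictly less than the third since $\tfrac{1}{m}$ and $\tfrac{1}{m+1}$ both exceed $\tfrac{1}{2m+1}$. Consequently $\alpha(y):=S_m(y)/(S_m(y)-S_{m-1}(y))$ is continuous on the domain and satisfies $\alpha>1$ because $S_{m-1}(y)>0$.

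Next, from $z-1=(x-y-2)\,S_m(S_m-S_{m-1})$ and $t-2=(x-y-2)(S_m-S_{m-1})^2$ I obtain $z=1+\alpha(t-2)$, so $x\mapsto t$ is a bijection $(y+2,\infty)\to(2,\infty)$. I parametrize $(2,\infty)$ by $t=s+s^{-1}$ with $s>1$ and use $S_k(t)=(s^{k+1}-s^{-(k+1)})/(s-s^{-1})$. A direct manipulation (multiplying through by $(s-s^{-1})\,s^{n+1}$) rewrites $R_{C(2m+1,-2n)}(x,y)=0$ as
\[
(s-1)\bigl[\,s^{2n+1}+1-\alpha(s-1)(s^{2n}-1)\,\bigr]=0,
\]
and since $s>1$ the factor $s-1$ is nonzero and may be discarded, yielding
\[
P(s):=(1-\alpha)\,s^{2n+1}+\alpha\, s^{2n}+\alpha\, s+(1-\alpha)=0.
\]
Observe that $P$ is self-reciprocal, $P(s)=s^{2n+1}P(1/s)$, so its positive real roots come in reciprocal pairs $\{s,1/s\}$.

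Since $\alpha>1$, the nonzero coefficients $(1-\alpha,\alpha,\alpha,1-\alpha)$ of $P$ have sign pattern $(-,+,+,-)$, giving exactly two sign changes, so Descartes' rule bounds the number of positive real roots of $P$ by two. Evaluating $P(0)=1-\alpha<0$, $P(1)=2>0$, and $\lim_{s\to\infty}P(s)=-\infty$ the intermediate value theorem forces \emph{exactly} one root in $(0,1)$ and a unique root $s_1\in(1,\infty)$. Setting
\[
x(y):=y+2+\frac{s_1+s_1^{-1}-2}{\bigl(S_m(y)-S_{m-1}(y)\bigr)^2}
\]
produces the unique solution of $R_{C(2m+1,-2n)}(x,y)=0$ with $x>y+2$, and clearly $x(y)>y+2>0$. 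Since $s_1$ is simple ($P$ has only two positive roots, in disjoint intervals), $P'(s_1)\ne 0$, so the implicit function theorem gives smooth dependence of $s_1$ on the coefficient $\alpha$, and composing with the continuous map $y\mapsto\alpha(y)$ yields continuity of $x(y)$.

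The main obstacle I anticipate is the algebraic passage from $R_K=0$ to $P(s)=0$: one must identify the factor $s-1$ that appears upon clearing denominators as spurious (it corresponds to $t=2$, $x=y+2$, where $R_K=1\ne 0$, so no genuine solution is lost by removing it) and arrange the remaining coefficients so that both the Descartes count and the self-reciprocal structure of $P$ come out cleanly. Once $P$ is in hand, the Descartes/IVT existence-uniqueness step and the implicit function theorem continuity step are routine.
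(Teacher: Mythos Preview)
Your argument is correct, and it takes a genuinely different route from the paper's.

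The paper works directly with $R_K(x,y)$ as a polynomial of degree $n$ in $x$: it locates $n-1$ real roots $x_1(y),\dots,x_{n-1}(y)$ in the region $x<y+2$ by evaluating $R_K$ at the special points $s_j(y)=y+2-\dfrac{2-t_j}{(S_m(y)-S_{m-1}(y))^2}$ (where $t_j=2\cos\tfrac{(2j-1)\pi}{2n+1}$) and observing the alternating sign pattern there; then it finds one more root $x_0(y)$ in $(y+2,\infty)$ from $R_K(y+2,y)=1$ and $\lim_{x\to\infty}R_K(x,y)=-\infty$. Since this accounts for all $n$ roots and they are distinct, each is simple, the implicit function theorem applies, and uniqueness of the root above $y+2$ is automatic.

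You instead eliminate $x$ in favor of $s$ via $t=s+s^{-1}$ and reduce the Riley equation on the region $x>y+2$ to the quartic-like polynomial $P(s)=(1-\alpha)s^{2n+1}+\alpha s^{2n}+\alpha s+(1-\alpha)$, whose palindromic four-term structure makes Descartes' rule decisive. Your method is more economical for the statement at hand: you never need to locate the other $n-1$ roots, and uniqueness in $(1,\infty)$ drops out of the sign pattern $(-,+,+,-)$ together with $P(0)<0$, $P(1)=2$, $P(+\infty)=-\infty$. The paper's approach, on the other hand, yields extra information (the positions of all $n$ branches of the Riley variety), which is what was used in the companion Propositions for $C(2m,\pm 2n)$. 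One small cosmetic point: when $m=1$ you have $S_{m-1}=S_0\equiv 1$, so the appeal to its ``largest root $2\cos\tfrac{\pi}{m}$'' is vacuous; the positivity claim is of course still true.
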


\begin{proof}
Let $K=C(2m+1,-2n)$. We have $R_{K}(x,y)= S_n(t) -  z S_{n-1}(t)$ where 
\begin{eqnarray*}
t &=& 2- (y+2-x)(S_m(y)-S_{m-1}(y))^2,\\
z &=& 1 - (y+2-x)S_m(y) ( S_m(y)-S_{m-1}(y) ).
\end{eqnarray*}

Choose $t_j$ for $1 \le j \le n$ as in the proof of Proposition \ref{solution-even1}. Fix a real number $y > 2 \cos  \frac{\pi}{2m+1}$. Then $S_m(y) > S_{m-1}(y) >0$
by Lemma \ref{chev}(3).
Let $s_j(y) = y+2 - \frac{2-t_j}{(S_m(y)-S_{m-1}(y))^2}$ for $j = 1, \dots, n$. Then $s_{n}(y) < \dots < s_1(y)< y+2$. Since 
\[
R_K(s_{j}(y),y)=(1-z)S_n(t_{j})=(y+2-s_{j}(y))S_m(y) ( S_{m}(y) - S_{m-1}(y) )S_n(t_{j}), 
\]
$R_K(s_j(y),y)$ and $S_n(t_j)$ have the same sign.
Thus the sign of $R_K(s_j(y),y)$ changes alternately.
Hence, there exists $x_j(y) \in (s_{j+1}(y), s_{j}(y))$ such that $R_K(x_j(y), y)=0$ for each $1 \le j \le n-1$. 

By writing $R_K(x,y) = (t-z) S_{n-1}(t) - S_{n-2}(t)$ and noting that
\begin{eqnarray*}
t-z = 1+ (y+2-x)( S_{m}(y) - S_{m-1}(y)) S_{m-1}(y),
\end{eqnarray*}
we see that $R_K(x,y)$ is a polynomial of degree $n$ in $x$ with negative highest coefficient for each fixed real number $y > 2 \cos  \frac{\pi}{2m+1}$. 
For, $t$ (resp. $t-z$) has degree one in the variable $x$ with positive (resp. negative) coefficient,
and the Chebyshev polynomials $S_{n-1}(t)$ and $S_{n-2}(t)$
are polynomials of degree $n-1$ and $n-2$, respectively, in $t$ with positive highest coefficient.
Since $\lim_{x \to \infty} R_K(x,y) = -\infty$ and $R_K(y+2,y)=1$, there exists $x_0(y) \in (y+2, \infty)$ such that $R_K(x_0(y),y)=0$. For a fixed real number $y > 2 \cos  \frac{\pi}{2m+1}$, the polynomial $R_K(x,y)$ of degree $n$ in $x$ has exactly $n$  simple real roots $x_0(y), \dots, x_{n-1}(y)$ satisfying $x_{n-1}(y) <  \dots <  x_1(y) < y+2 < x_0(y) $, hence the implicit function theorem implies that each $x_j(y)$ is a continuous function in $y > 2 \cos  \frac{\pi}{2m+1}$.  

Letting $x(y) = x_{0}(y)$ for $y > 2 \cos  \frac{\pi}{2m+1}$, we have $x(y) > y+2$ and $R_K(x(y),y)=0$. 
\end{proof}

\begin{proposition} \label{solution-odd-limit}
The continuous real function $x(y)$ in Proposition \ref{solution-odd} satisfies the following properties:
\begin{enumerate}
\item $x(y) > 2 + \frac{S_m(y)}{S_{m-1}(y)} +  \frac{S_{m-1}(y)}{S_{m}(y)}> 4$ for all $y > 2 \cos  \frac{\pi}{2m+1}$,
\item $x(y) \to \infty$ as $y \to (2 \cos  \frac{\pi}{2m+1})^+$, 
\item  $y^{2m+2n-2} \big( x(y) - 2 - \frac{S_m(y)}{S_{m-1}(y)} - \frac{S_{m-1}(y)}{S_{m}(y)} \big) \to 1$ as $y \to \infty$.
\end{enumerate}
\end{proposition}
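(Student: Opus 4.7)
The plan is to introduce the approximation $x^* = x^*(y) := 2 + S_m(y)/S_{m-1}(y) + S_{m-1}(y)/S_m(y)$ (writing $S_j := S_j(y)$) and use it to anchor all three parts. The Chebyshev identity $S_m^2 + S_{m-1}^2 = 1 + y S_m S_{m-1}$ from Lemma \ref{chev}(1) rewrites $x^*$ as $(y+2) + 1/(S_m S_{m-1})$, so $x^* > y+2$ on the interval. Substituting $x = x^*$ (equivalently $u^* := 1/(S_m S_{m-1})$) into $t = 2 + u(S_m - S_{m-1})^2$ and $z = 1 + u S_m(S_m - S_{m-1})$ and simplifying via the same Chebyshev identity gives $z^* = S_m/S_{m-1}$ and $t^* = z^* + (z^*)^{-1}$. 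Because of this special form, $S_k(t^*) = [(z^*)^{k+1} - (z^*)^{-k-1}]/(z^* - (z^*)^{-1})$ telescopes to the exact evaluation $R_K(x^*, y) = (S_{m-1}/S_m)^n$, which is positive for $y > 2\cos(\pi/(2m+1))$ since $S_m > S_{m-1} > 0$ there by Lemma \ref{chev}(3). Part (1) follows immediately: by Proposition \ref{solution-odd}, $R_K(\cdot, y)$ is a polynomial of degree $n$ in $x$ with negative leading coefficient whose unique root in $(y+2, \infty)$ is $x(y)$, so $R_K$ is positive on $(y+2, x(y))$ and negative on $(x(y), \infty)$; combined with $R_K(x^*, y) > 0$ and $x^* > y+2$, this forces $x^* < x(y)$, and $x^* > 4$ is the strict AM--GM inequality applied to the distinct positive reals $S_m/S_{m-1}$ and $S_{m-1}/S_m$.

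Part (2) is a compactness argument at the boundary $y_0 := 2\cos(\pi/(2m+1))$. Lemma \ref{chev}(3) gives $S_m(y_0) = S_{m-1}(y_0)$, so $t(x, y_0) \equiv 2$ and $z(x, y_0) \equiv 1$ as functions of $x$; hence $R_K(x, y_0) = S_n(2) - S_{n-1}(2) = (n+1)-n = 1$ identically, and $R_K(\cdot, y_0)$ has no roots. If $x(y)$ were bounded along some sequence $y_k \to y_0^+$, a convergent subsequence $x(y_{k_j}) \to X$ combined with joint continuity of $R_K$ would yield $R_K(X, y_0) = 0$, a contradiction.

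Part (3) is the perturbation analysis around $x^*$. Since $x(y)$ is a simple root of $R_K(\cdot, y)$, Taylor expansion gives
\[
x(y) - x^* = -\frac{R_K(x^*, y)}{(\partial R_K/\partial x)(x^*, y)}\,(1 + o(1))
\]
provided $x(y) - x^* \to 0$. The numerator is $(S_{m-1}/S_m)^n$ by the calculation above. For the denominator I would differentiate $R_K = S_n(t) - z S_{n-1}(t)$ using $\partial t/\partial x = (S_m - S_{m-1})^2$, $\partial z/\partial x = S_m(S_m - S_{m-1})$, and evaluate at $(t^*, z^*)$ using the closed form
\[
S_k'(t) = \frac{k\,s^{k+2} - (k+2) s^k + (k+2) s^{-k} - k\, s^{-k-2}}{(s - s^{-1})^3},\qquad t = s + s^{-1},
\]
specialized at $s = z^*$. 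After expansion and the key cancellation one obtains $(\partial R_K/\partial x)(x^*, y) = -\,S_m^n/S_{m-1}^{n-2}\,(1 + o(1))$ as $y \to \infty$, hence $x(y) - x^* = (S_{m-1}^{2n-2}/S_m^{2n})(1 + o(1))$. Lemma \ref{chev}(2) gives $S_j(y)/y^j \to 1$, so $S_{m-1}^{2n-2}/S_m^{2n} = y^{2 - 2m - 2n}(1 + o(1))$, which is the claimed limit.

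The principal obstacle is the derivative computation in Part (3): the two contributions $(S_m - S_{m-1})^2 [S_n'(t^*) - z^* S_{n-1}'(t^*)]$ and $-S_m(S_m - S_{m-1}) S_{n-1}(t^*)$ both scale like $S_m^{n+1}/S_{m-1}^{n-1}$ as $y \to \infty$, and their highest-order parts must cancel exactly to leave the subleading $-S_m^n/S_{m-1}^{n-2}$ that drives the correct exponent $2-2m-2n$. Tracking enough terms through this cancellation, together with a routine second-derivative bound on $R_K$ near $x^*$ to justify the Newton linearization, are the delicate bookkeeping steps.
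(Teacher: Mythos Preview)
Your arguments for (1) and (2) are correct, and the explicit evaluation $R_K(x^*,y)=(S_{m-1}/S_m)^n$ with $x^*:=2+S_m/S_{m-1}+S_{m-1}/S_m$ is a nice observation. The paper takes a different route: on the zero locus $R_K=0$ it combines $S_n(t)=zS_{n-1}(t)$ with the Chebyshev identity $S_n^2-tS_nS_{n-1}+S_{n-1}^2=1$ to obtain $(z^2-tz+1)S_{n-1}^2(t)=1$, and then computes (with $G=S_m$, $H=S_{m-1}$)
\[
z^2-tz+1=(t-2)\bigl((x-2)GH-G^2-H^2\bigr)=(t-2)\,GH\,(x-x^*).
\]
This yields the single \emph{exact} identity $(t-2)\,GH\,(x(y)-x^*)\,S_{n-1}^2(t)=1$, from which (1) (positivity of the left factors forces $x>x^*$), (2) (boundedness of $x$ would make the left side vanish), and (3) (read off the asymptotics once $t/y\to 1$) all follow in a few lines.

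Your treatment of (3), by contrast, has a real gap beyond the acknowledged bookkeeping. The Newton linearization
\[
x(y)-x^*=-\frac{R_K(x^*,y)}{(\partial R_K/\partial x)(x^*,y)}\,(1+o(1))
\]
is only valid once you already know $x(y)-x^*\to 0$; you have not established this a priori bound, and it does not follow from (1) alone, which only gives $x(y)>x^*$. (One could get it by evaluating $R_K$ at $x^*+\varepsilon$ and showing it is eventually negative, but that is additional work.) Moreover, the ``key cancellation'' you flag is not merely bookkeeping: both contributions to $(\partial R_K/\partial x)(x^*,y)$ scale like $y^{2m+n-1}$, and the subleading term $-S_m^n/S_{m-1}^{n-2}\sim -y^{2m+n-2}$ you need only emerges after the full computation with the explicit $S_k'$ formula. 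Your asserted leading asymptotic for the derivative is in fact correct, but as written the proof of (3) is incomplete. The paper's zero-locus identity bypasses both the a priori estimate and the cancellation entirely, which is the main advantage of its approach.
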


\begin{proof}
(1)
Since $R_K(x(y),y)=0$ we have $S_n(t) = z S_{n-1}(t)$.
By Lemma \ref{chev}(1), $S^2_n(t) - t S_n(t)S_{n-1}(t) + S^2_{n-1}(t) =1$.
Thus we have $(z^2 - t z +1)S^2_{n-1}(t) =1$. Let $G = S_m(y)$ and $H=S_{m-1}(y)$. 
Then $G>H>0$ for $y > 2 \cos  \frac{\pi}{2m+1}$ by Lemma \ref{chev}(3). 
By using $G^2-yGH+H^2=1$ and
 $t - 2 =(x-y-2) (G-H)^2$,
we have
\begin{align*}
z^2 - t z+1 &= (z-1)^2 - (t-2)z \\
&=(x-y-2)^2 G^2(G-H)^2-(x-y-2)(G-H)^2\left( 1+(x-y-2)G(G-H)  \right)
\\
&=(x-y-2)(G-H)^2( (x-y-2)GH-1)\\
&= (x-y-2)(G-H)^2 ((x-2)GH - G^2-H^2) \\
&= (t-2) ((x-2)GH - G^2-H^2). 
\end{align*}
Hence $(t-2) ((x-2)GH - G^2-H^2)S^2_{n-1}(t) =1$. Since $t - 2 =(x-y-2) (G-H)^2 > 0$, we get $(x-2)GH - G^2-H^2 > 0$. This implies that 
$$x > 2 + \frac{G^2+H^2}{GH} = 4 + \frac{(G-H)^2}{GH} > 4.$$

(2)
As $y \to (2 \cos  \frac{\pi}{2m+1})^+$ we have $G-H \to 0$ by Lemma \ref{chev}(3).
 If $x$ is bounded, then $t - 2 =(x-2-y) (G-H)^2 \to 0$ and $1=(t-2) ((x-2)GH - G^2-H^2)S^2_{n-1}(t) \to 0$, a contradiction. Hence $x(y) \to \infty$ as $y \to (2 \cos  \frac{\pi}{2m+1})^+$. 

(3)
We now consider the case $y>2$. 
First,
\begin{align*}
t+2-x&= 
(x-y-2)(G-H)^2+4-x\\
&=(x-y-2)(1+(y-2)GH)+4-x\\
&=(x-y-2)(y-2)GH-y+2\\
&=(y-2) \left( (x-y-2)GH-1\right)\\
&=(y-2)((x-2)GH-G^2-H^2)>0.
\end{align*}
Hence $t>x-2>y$. Noting that $(t-2) ((x-2)GH - G^2-H^2)S^2_{n-1}(t) =1$, we have $(x-2)GH - G^2-H^2 \to 0$ as $y \to \infty$. This is equivalent to $GH(x-2-y) - 1 = GH \frac{t-2}{(G-H)^2} -1 \to 0$ as $y \to \infty$. 
Since
\[
GH \frac{t-2}{(G-H)^2}=\frac{\frac{t}{y}-\frac{2}{y}}{1-\frac{2}{y}+\frac{1}{yGH}},
\]
$\frac{t}{y} \to 1$ as $y \to \infty$. 
The equation $(t-2)  ((x-2)GH - G^2-H^2) S^2_{n-1}(t) =1$ gives
\[
(t-2)GH \left(x-2-\frac{G}{H}-\frac{H}{G}\right)S_{n-1}^2(t)=1.
\]
Since $G$ and $H$ have degree $m$ and $m-1$ respectively in $y$ with positive highest coefficient
and $S_{n-1}(t)^2$ has degree $2n-2$ in $t$, 
 $y^{2m+2n-2}(x-2-\frac{G}{H}-\frac{H}{G})\to 1$ as $y \to \infty$. This completes the proof of Proposition \ref{solution-odd-limit}.
\end{proof}

\subsection{Proof of Theorem \ref{main}}

Let $X$ be the complement of an open tubular neighborhood of $K=C(2m+1,-2n)$ in $S^3$, and $X_r$ the 3-manifold obtained from $S^3$ by $r$-surgery along $K$. Consider the function $x(y)$ in Proposition \ref{solution-odd}. For each $y > 2 \cos  \frac{\pi}{2m+1}$, we have $x(y) > 2 + \frac{S_m(y)}{S_{m-1}(y)} +  \frac{S_{m-1}(y)}{S_{m}(y)}> 4$ by Proposition \ref{solution-odd-limit}(1). Let $M(y)=\frac{1}{2} (\sqrt{x(y)} + \sqrt{x(y)-4}) > 1$, then $\sqrt{x(y)}=M(y)+M(y)^{-1}$. Since $R_K(x(y), y)=0$, there exists a non-abelian representation $\rho_y\colon \pi_1(X) \to \mathrm{SL}_2(\BR)$ such that $$\rho_y(a) = \left[ \begin{array}{cc}
M(y)& 1 \\
0 & M(y)^{-1} \end{array} \right] \quad \text{and} \quad 
\rho_y(b) = \left[ \begin{array}{cc}
M(y) & 0 \\
2 - y(x) & M(y)^{-1} \end{array} \right].$$
Moreover, the image of the canonical longitude $\lambda$ corresponding to the meridian $\mu=a$ has the form $\rho_y(\lambda) = \left[ \begin{array}{cc}
L(y) & * \\
0 & L(y)^{-1} \end{array} \right]$, where 
\[
L (y)= - M(y)^{4n} \frac{M(y)^{-1}S_m(y) - M(y) S_{m-1}(y)}{M(y) S_m(y) - M(y)^{-1} S_{m-1}(y)}.
\]

As in the proof of Proposition \ref{solution-odd-limit},
 we let $G = S_m(y)$ and $H=S_{m-1}(y)$. Then $G>H>0$ for $y > 2 \cos  \frac{\pi}{2m+1}$ and 
$L(y) = M(y)^{4n} \frac{M(y)^2 - \frac{G}{H}}{M(y)^2 \frac{G}{H}- 1}$. Since $M(y)^2 + M(y)^{-2} = x(y)-2 > \frac{G}{H} + \frac{H}{G}$, we have $M(y)^2 > \frac{G}{H} > 1$. This implies that $$L(y) =  M(y)^{4n} \frac{M(y)^2 - \frac{G}{H}}{M(y)^2 \frac{G}{H}- 1}>0.$$

As $y \to (2 \cos  \frac{\pi}{2m+1})^+$ we have $\frac{G}{H} = \frac{S_m(y)}{S_{m-1}(y)}\to 1$ and $M(y) \to \infty$ (by Proposition \ref{solution-odd-limit}(2)), so $\frac{M(y)^2 - \frac{G}{H}}{M(y)^2\frac{G}{H}- 1} \to 1$. Hence
$$
\frac{\log L(y)}{\log M(y)} = 4n +\frac{\log \frac{M(y)^2 - \frac{G}{H}}{M(y)^2\frac{G}{H}- 1}}{\log M(y)}\to 4n.
$$

As $y \to \infty$ we have $y^{2m+2n-2} \big( x(y)- 2 - \frac{G}{H} - \frac{H}{G} \big) \to 1$
by Proposition \ref{solution-odd-limit}(3).
 This is equivalent to $y^{2m+2n-2} (M(y)^2 - \frac{G}{H}) (1-\frac{1}{M(y)^2 \frac{G}{H}}) \to 1$, which implies that $y^{2m+2n-2} (M(y)^2 - \frac{G}{H}) \to 1$. 
 Thus $M(y)^2-\frac{G}{H}\to 0$.
 Since $G$ and $H$ have degree $m$ and $m-1$ respectively in $y$ with positive highest coefficient, $\frac{G}{H}-y\to 0$.
 Then $M(y)^2-y\to 0$.
 Asymptotically, $M(y)^2-\frac{G}{H}\sim y^{2-2m-2n}$ and $M(y)^2 \frac{G}{H}-1\sim y^2$.  
 Hence 
$$
\frac{\log L(y)}{\log M(y)} = 4n +\frac{\log \frac{M(y)^2 - \frac{G}{H}}{M(y)^2\frac{G}{H}- 1}}{\log M(y)} \to 4n - (4m+4n) = -4m.
$$ 

Consider the continuous function $f(y) := -\frac{\log L(y)}{\log M(y)}$ for $y > 2 \cos  \frac{\pi}{2m+1}$. Then from the above arguments we conclude that the image of $f$ contains the interval $(-4n, 4m)$. This implies that for any slope $r \in (-4n, 4m)$ there exists $y > 2 \cos  \frac{\pi}{2m+1}$ such that $r  = f(y) = -\frac{\log L(y)}{\log M(y)}$. 

The continuous family $C$ of nonabelian representations $\{\rho_y\}$, $y > 2 \cos  \frac{\pi}{2m+1}$, contains a special one which is the reducible nonabelian representation $\rho_2$ (i.e. $\rho_y$ at $y=2$). This representation has index $0$ by Corollary \ref{c1} and so by Lemma \ref{l1} the continuous family $C$ has index $0$. Applying Proposition \ref{p1}(1) to $C$, with $a(y) = \log M(y)$ and $b(y) = \log L(y)$, we conclude that any slope $r \in (-4n, 4m)$ is a left orderable slope of $K=C(2m+1, -2n)$ detected by hyperbolic $\mathrm{SL}_2(\BR)$-representations of the knot group. 


\end{document}